\newcommand{\R}{\mathbb{R}}
\newcommand{\bv}{\mathbf{v}}
\newcommand{\tbj}{\widetilde{\mathbf{J}}}
\newcommand{\bpsi}{\boldsymbol{\psi}}
\newcommand{\eps}{\ensuremath{\varepsilon}}
\newcommand{\duality}[2]{\left\langle #1, #2 \right\rangle}
\newcommand{\no}{\mathbf{n}}
\newcommand{\ve}{\mathbf{v}}
\newcommand{\we}{\mathbf{w}}
\newcommand{\Div}{\operatorname{div}}
\newtheorem{definition}{Definition}[section]
\newtheorem{lemma}[definition]{Lemma}
\newtheorem{theorem}[definition]{Theorem}
\numberwithin{equation}{section}  
\begin{document}
\begin{titlepage}
\title{Convergence of a Nonlocal to a Local Diffuse Interface Model for Two-Phase Flow with Unmatched Densities}
\author{  Helmut Abels\footnote{Fakult\"at f\"ur Mathematik,  
Universit\"at Regensburg,
93040 Regensburg,
Germany, e-mail: {\sf helmut.abels@mathematik.uni-regensburg.de}}\ \ and 
Yutaka Terasawa\footnote{Graduate School of Mathematics, Nagoya University, Furocho Chikusaku, Nagoya, 464-8602, Japan, e-mail: {\sf yutaka@math.nagoya-u.ac.jp}}
}
\date{\today \\[3ex]\emph{Dedicated to Maurizio Grasselli on the occasion of his 60th birthday}}
\end{titlepage}
\maketitle
\begin{abstract}
We prove convergence of suitable subsequences of weak solutions of a  diffuse interface model for the two-phase flow of incompressible fluids with different densities with a nonlocal Cahn-Hilliard equation to weak solutions of the corresponding system with a standard ``local'' Cahn-Hilliard equation. The analysis is done in the case of a sufficiently smooth bounded domain with no-slip boundary condition for the velocity and Neumann boundary conditions for the Cahn-Hilliard equation. The proof is based on the corresponding result in the case of a single Cahn-Hilliard equation and compactness arguments used in the proof of existence of weak solutions for the diffuse interface model. 
\end{abstract}
\noindent{\bf Key words:} Two-phase flow, Navier-Stokes equation,
 diffuse interface model, mixtures of viscous fluids, Cahn-Hilliard equation, non-local operators

\noindent{\bf AMS-Classification:} 
Primary: 76T99; 
Secondary: 35Q30, 
35Q35, 
76D03, 
76D05, 
76D27, 
76D45 


\section{Introduction} \label{intro}

In this paper, we consider the convergence of a non-local diffuse interface model for the two-phase flows of two incompressible fluids with unmatched densities to the corresponding ``local'' system. More precisely, we consider the non-local Navier-Stokes/Cahn-Hilliard system
\begin{alignat}{2}
 \partial_t (\rho_\eps \mathbf{v}_\eps) + \operatorname{div} ( \bv \otimes(\rho \bv_\eps + \tbj_\eps)) - \operatorname{div} (2 \nu(\varphi_\eps) D \bv_\eps)
  + \nabla p_\eps  
  & = \mu_\eps\nabla \varphi_\eps  &\quad & \text{in } Q_T ,  \label{eq:1} 
\\
 \operatorname{div} \, \bv_\eps &= 0&& \text{in } Q_T,  \label{eq:2} 
\\
 \partial_t \varphi_\eps + \bv_\eps \cdot \nabla \varphi_\eps &= \operatorname{div}\left(m(\varphi_\eps) \nabla \mu_\eps \right)&& \text{in } Q_T, \label{eq:3} 
\\
  F'(\varphi_\eps)  +a_\eps(x)\varphi_\eps - J_\eps \ast \varphi_\eps &= \mu_\eps && \text{in } Q_T, \label{eq:4} 
\end{alignat}
where $\rho_\eps=\rho(\varphi_\eps):= \frac{\tilde{\rho}_1+\tilde{\rho}_2}2+ \frac{\tilde{\rho}_2-\tilde{\rho}_1}2\varphi_\eps $ is the density of the mixture of the two fluids, $\tilde{\rho}_1, \tilde{\rho}_2>0$ are the specific constant mass densities of the unmixed fluids,
$$
\tbj_\eps = -\frac{\tilde{\rho}_2 - \tilde{\rho}_1}{2} m(\varphi_\eps) \nabla \mu
$$ is a relative mass flux, 
$Q_T=\Omega\times(0,T)$, where $T\in (0,\infty)$ is arbitrary. We assume that $\Omega \subset \mathbb{R}^d$, $d=2,3$, is a bounded domain with $C^2$-boundary.
Here $\ve_\eps\colon Q_T \to \R^d$ is the (mean) velocity of the fluid mixture, $p_\eps\colon Q_T\to \R$ is its pressure, $\varphi_\eps \colon Q_T \to \R$ is the difference of volume fractions of the fluids, and $\mu_\eps\colon Q_T \to \R$ is the chemical potential related to $\varphi_\eps$, which is the first variation of the (non-local) free energy
\begin{equation*}
  E_\eps (\varphi) = \frac14 \int_\Omega \int_\Omega J_\eps (x-y) |\varphi(x)-\varphi(y)|^2\, dx\, dy +\int_\Omega F(\varphi(x))\, dx.
\end{equation*}
Moreover, $J_{\eps}$ is a nonnegative function on $\R^d$, $F$ is a homogeneous free energy density and
\begin{equation*}
J_\eps \ast \varphi(x) := \int_{\Omega} J_\eps(x-y) \varphi(y)\,dy,\quad a_\eps(x) :=  \int_{\Omega} J_\eps(x-y) \,dy \qquad\text{for all } x \in \Omega.
\end{equation*} 
More precisely, we assume that $J_{\eps}(x) = \frac{\eta_{\eps}(|x|)}{|x|^{2}} $ for all $x\in\R^d$ and $ J_{\eps} \in W^{1, 1} (\mathbb{R}^d) $ for $ \eps > 0$ and $(\eta_{\eps})_{\eps >0}$ is a family of molifiers with the following properties:
\begin{alignat}{2}
&\eta_{\eps} \colon \mathbb{R} \longrightarrow [0, +\infty),~~~\eta_{\eps} \in L^{1}_{loc}(\mathbb{R}),&~~~\eta_{\eps}(r) = \eta_{\eps}(-r)~~~~\forall r \in \mathbb{R},\eps>0; \nonumber\\
&\int_{0}^{+\infty} \eta_{\eps}(r)r^{d-1}\,dr = \frac{2}{C_d}~~~~\forall \eps >0; && \nonumber \\
&\lim_{\eps \rightarrow 0+} \int_{\delta}^{+\infty} \rho_{\eps}(r)r^{d-1}\,dr = 0~~~\forall \delta > 0,&& \nonumber 
\end{alignat} 
where $C_d:= \int_{S^{d-1}} |e_1 \cdot \sigma|^2 d \mathcal{H}^{d-1}(\sigma)$. Moreover, we assume that
$F\colon [-1,1]\to \R$ is given by
\begin{equation*} 
F(s) = \frac{\theta}{2} ((1+s) \log(1+s) + (1-s) \log (1-s)) - \frac{\theta_c}{2}s^2 \qquad \text{for all }s\in [-1,1]
\end{equation*}  
for some $0 < \theta_c < \theta$ for simplicity. But every $F$ satisfying the assumptions in \cite{DavoliEtAlW11} and \cite{FrigeriNonlocalAGG}  can be treated as well. Finally, $\nu \colon [-1,1]\to (0,\infty)$ and $m \colon [-1,1]\to (0,\infty)$ are viscosity and mobility coefficients, which are assumed to be sufficiently smooth. 

We complement the system \eqref{eq:1}-\eqref{eq:4} with
the following boundary and initial conditions:
\begin{alignat}{3}
\mathbf{v}_{\eps}|_{\partial\Omega}&=0,& \quad \left.\frac{\partial \mu_{\eps}}{\partial \no}\right|_{\partial\Omega}&=0 &\quad&\text{on }\partial \Omega\times (0,T), \label{bc:1} \\
\mathbf{v}_{\eps}|_{t=0} &= \mathbf{v}_{0,\eps},& \varphi_{\eps}|_{t=0}&=\varphi_{0,\eps} &\quad&\text{in }\Omega. \label{ic:1}
\end{alignat}
The system \eqref{eq:1}-\eqref{eq:4} is a variant of the following diffuse interface model for the two-phase flows of two incompressible fluids with unmatched densities, which was derived in \cite{AbelsGarckeGruen2}:
\begin{alignat}{2}
 \partial_t (\rho \mathbf{v}) + \operatorname{div} ( \bv \otimes(\rho \bv + \tbj)) - \operatorname{div} (2 \nu(\varphi) D \bv)
  + \nabla p  
  & = \mu \nabla \varphi&\quad & \text{in } Q_T ,  \label{eq:AGG1} 
\\
 \operatorname{div} \, \bv &= 0&& \text{in } Q_T,  \label{eq:AGG2} 
\\
 \partial_t \varphi + \bv \cdot \nabla \varphi &= \operatorname{div}\left(m(\varphi) \nabla \mu \right)&& \text{in } Q_T, \label{eq:AGG3} 
\\
 \mu&=  F'(\varphi)  -\Delta \varphi && \text{in } Q_T, \label{eq:AGG4} 
\end{alignat}
where $\rho=\rho(\varphi):= \frac{\tilde{\rho}_1+\tilde{\rho}_2}2+ \frac{\tilde{\rho}_2-\tilde{\rho}_1}2\varphi $ is the density of the mixture of the two fluids and
$
\tbj = -\frac{\tilde{\rho}_2 - \tilde{\rho}_1}{2} m(\varphi) \nabla \mu
$ is a relative mass flux as before. This system is complemented by the boundary and initial conditions
\begin{alignat}{3}
\mathbf{v}|_{\partial\Omega}&=0,& \quad \left.\frac{\partial \mu}{\partial \no}\right|_{\partial\Omega}&=\left.\frac{\partial \varphi}{\partial \no}\right|_{\partial\Omega}=0 &\quad&\text{on }\partial \Omega\times (0,T), \label{bc:2} \\
\mathbf{v}|_{t=0} &= \mathbf{v}_0,& \varphi|_{t=0}&=\varphi_0 &\quad&\text{in }\Omega. \label{ic:2}
\end{alignat}
We note that \eqref{eq:1}-\eqref{eq:4} is obtained by the latter system by replacing the standard ``local'' Cahn-Hilliard equation \eqref{eq:AGG3}-\eqref{eq:AGG4} (with an additional convection term $\bv\cdot \nabla \varphi$) by its non-local variant \eqref{eq:3}-\eqref{eq:4}. Moreover, note that in \eqref{bc:2} an additional Neumann boundary condition for $\varphi$ is present, which is not posed for the non-local system. 

It is the goal of the present contribution to show convergence of weak solutions of \eqref{eq:1}-\eqref{eq:4} together with \eqref{bc:1}-\eqref{ic:1} to a weak solution of \eqref{eq:AGG1}-\eqref{eq:AGG4} together with \eqref{bc:2}-\eqref{ic:2} for a suitable subsequence and under suitable conditions on the initial values. Existence of weak solutions of \eqref{eq:AGG1}-\eqref{ic:2} was first proven by A., Depner, and Garcke in \cite{AbelsDepnerGarcke}. Existence of strong solutions for small times was proved by Weber~\cite{ThesisWeber}, cf.\ also A.\ and Weber~\cite{AbelsWeberAGG}. A result on well-posedness of this system in two-space dimensions and further references can be found in the recent contribution by Giorgini~\cite{GiorginWellposednessAGG}.  The existence of weak solutions to the non-local model \eqref{eq:1}-\eqref{eq:4} together with \eqref{bc:1}-\eqref{ic:1} was proved by Frigeri~\cite{FrigeriNonlocalAGG} for suitable integrable kernels $J_\eps$ and by the authors in \cite{AbelsTerasawaNonlocalAGG} for singular kernels. We refer to Frigeri~\cite{FrigeriNonlocalDegAGG} for a recent overview of the literature for these non-local models, to Gal, Grasselli, and Wu~\cite{GalGrasselliWu} for a recent result on the (local) Navier-Stokes/Cahn-Hilliard system with different densities and further references, and to Frigeri, Gal, and Grasselli~\cite{FigeriGalGrasselliNonlocalCH} for a recent result on the nonlocal Cahn-Hilliard equation with singular potentials and degenerate mobility and further references.

Convergence of solutions of the nonlocal Cahn-Hilliard equation, i.e., \eqref{eq:3}-\eqref{eq:4} with $\ve_\eps\equiv 0$, to the local Cahn-Hilliard equation, i.e., \eqref{eq:AGG3}-\eqref{eq:AGG4} with $\ve\equiv 0$, was proved by Melchionna et al.~\cite{MelchionnaEtAlNonlocalLocalCH} in the case of periodic boundary conditions and a regular free energy density $F$, by Davoli et al.\ \cite{DavoliEtAlNonlocalLocalCHPeriodic} in the case of periodic boundary conditions and singular free energy densities, by Davoli et al.\ \cite{DavoliEtAlNonlocalLocalCHViscosity} in the case of Neumann boundary conditions with an additional viscosity term in the nonlocal Cahn-Hilliard equation and in \cite{DavoliEtAlW11} in the case of Neumann boundary conditions and $W^{1,1}$-kernels. We note that these results are based on the results of Ponce~\cite{PonceNonlocalPoincare,PonceNonlocalLocalConvergence}, which in particular yield $\Gamma$-convergence of the non-local free energy of $E_\eps$ to the corresponding local free energy
\begin{equation*}
 E(\varphi):= \int_\Omega \frac{|\nabla \varphi|^2}2\, dx + \int_\Omega F(\varphi)\, dx.
\end{equation*}
We refer to \cite{DavoliEtAlW11} for further references.

In this contribution we combine the arguments from \cite{DavoliEtAlW11} for the convergence of the nonlocal to the local Cahn-Hilliard equation and \cite{AbelsDepnerGarcke} for the existence of weak solutions to the limit system \eqref{eq:AGG1}-\eqref{ic:2} to show convergence of weak solutions of \eqref{eq:1}-\eqref{eq:4} together with \eqref{bc:1}-\eqref{ic:1} to a weak solution of \eqref{eq:AGG1}-\eqref{eq:AGG4} together with \eqref{bc:2}-\eqref{ic:2} for a suitable subsequence. The structure of this contribution is as follows: In Section~\ref{prelimi} we recall some preliminary results and basic definition. Then the main result is proved in Section~\ref{main}.

\section{Preliminaries} \label{prelimi}

In the following $D\ve= \frac12 (\nabla \ve + \nabla \ve^T)$ denotes the symmetric part of the gradient of a vector field $\ve$. The tensor product $a \otimes b$ of the vectors $a$ and $b$ is $ (a \otimes b)_{i, j} = a_i b_j $ for $i, j = 1, \cdots, d$. For a normed space $X$ we denote by
\begin{equation*}
  \duality{x'}{x}_X := x'(x)\qquad \text{for all }x'\in X',x\in X
\end{equation*}
its duality product. $C([0,T];X)$ denotes the space of all strongly continuous $f\colon [0,T]\to X$ equipped with the supremum-norm.
$BC_w([0,T];X)$ denotes the spaces of all bounded and weakly continuous $f\colon [0,T]\to X$ equipped with the supremum-norm. 
If $M \subset \mathbb{R}^d$ is measurable, $L^q(M)$ denotes the usual Lebesgue space and $\| \cdot \|_q$ its norm. Moreover, $L^q(M; X)$ denotes the set of all strongly measurable $q$-integrable functions/essentially bounded functions, where $X$ is a Banach space. If $M=(a, b)$, we write for simplicity $L^q(a, b)$ and $L^q(a, b; X)$.

Let $ \Omega \subset \mathbb{R}^d$ be a domain. Then $W^m_q(\Omega)$, $m \in \mathbb{N}_0$, $1 \leq q \leq \infty$, denotes the usual $L^q$-Sobolev space,
$W^{m}_{q, 0}(\Omega)$ the closure of $C^{\infty}_0(\Omega)$ in $W^m_q(\Omega)$ and $W^{-m}_q(\Omega)=(W^m_{q', 0})'$. $H^s(\Omega)$, $s \in \mathbb{R}$, denotes the usual $L^2$-Bessel potential spaces and $H^s_0(\Omega)$ the closure of $C^{\infty}_0(\Omega)$ in $H^s(\Omega)$ when $s$ is positive.

Furthermore,
$$\mathcal{D}(A) = H^2(\Omega)^d \cap H^1_0(\Omega)^d\cap L^2_\sigma(\Omega)$$
denotes the domain of the Stokes operator on $L^2_\sigma(\Omega):= \overline{\{\bpsi \in C^\infty_0 (\Omega)^d: \Div \bpsi=0\}}^{L^2(\Omega)}$.

We consider weak solutions of \eqref{eq:1}-\eqref{eq:4} together with \eqref{bc:1}-\eqref{ic:1} in the following sense:
\begin{definition} \label{def-weak}
  Let $\ve_0 \in L^2_\sigma(\Omega)$, $\varphi_0 \in L^\infty(\Omega)$ with $|\varphi_0|\leq 1$ almost everywhere and $T\in (0,\infty)$, $\eps>0$ be given. Then $(\ve,\varphi,\mu)$ is a weak solution of \eqref{eq:1}-\eqref{ic:1} if
  \begin{align*}
    \ve &\in BC_w([0,T]; L^2_\sigma(\Omega))\cap L^2(0,T; H^1_0(\Omega)^d),\\
    \varphi &\in L^\infty(0,T;L^2(\Omega))\cap L^2(0,T;H^1(\Omega)),\\
    \mu= a_\eps \varphi-J_\eps\ast \varphi +F'(\varphi) &\in L^2(0,T; H^1(\Omega)),\\
    \partial_t (\rho \ve)&\in L^{4/3} (0,T;\mathcal{D}(A)'), \quad \partial_t\varphi \in L^2(0,T;(H^1(\Omega))'),
  \end{align*}
 $|\varphi(x,t)|<1$ almost everywhere in $Q_T$, $\ve|_{t=0}= \ve_0$, $\varphi|_{t=0}= \varphi_0$ and the following holds true:
  \begin{enumerate}
  \item For every $\psi \in H^1(\Omega)$ and $\bpsi\in \mathcal{D}(A)$ and almost every $t\in (0,T)$ we have
    \begin{align*}
      \duality{\partial_t (\rho\ve)(t)}{\bpsi}_{\mathcal{D}(A)} &- \int_\Omega ( (\ve+ \tbj) \otimes \rho\ve : D\bpsi \,dx + \int_\Omega 2\nu(\varphi)D\ve:D\bpsi\, dx = -\int_\Omega \varphi \nabla \mu\cdot \bpsi \,dx,\\
      \duality{\partial_t \varphi(t)}{\psi}_{H^1(\Omega)} &+ \int_\Omega m(\varphi) \nabla \mu\cdot \nabla \psi = \int_\Omega \ve \varphi \cdot \nabla \psi\,dx,
    \end{align*}
    where $\tbj= -\tfrac{\tilde{\rho}_2 - \tilde{\rho}_1}{2}m(\varphi) \nabla \mu$.
  \item The energy inequality
    \begin{equation}
      \label{eq:Energy}
      \mathcal{E}_\eps(\ve(t),\varphi(t))+ \int_0^t\int_\Omega (2\nu(\varphi)|D\ve|^2+ m(\varphi)|\nabla \mu|^2) \,dx\, d\tau \leq \mathcal{E}_\eps(\ve(0),\varphi(0))
    \end{equation}
    holds true for all $t \in [0,T]$, where
    \begin{align*}
            \mathcal{E}_\eps(\ve,\varphi)&:= \frac12\int_\Omega \rho(\varphi)|\ve|^2\, dx + E_\eps(\varphi),\\
      E_\eps(\varphi) &:= \frac14 \int_\Omega \int_\Omega J_\eps(x-y) (\varphi(x)-\varphi(y))^2 \,dx\, dy + \int_\Omega F(\varphi(x))\, dx.
    \end{align*}
  \end{enumerate}
\end{definition}
Existence of weak solutions  for any $\ve_0 \in L^2_\sigma(\Omega)$, $\varphi_0 \in L^\infty(\Omega)$ with $|\varphi_0|\leq 1$ almost everywhere and $T\in (0,\infty)$, $\eps>0$ follows from \cite[Theorem~1]{FrigeriNonlocalAGG}.

For the following we denote by
\begin{alignat*}{2}
  E_\eps^0(\varphi) &:= \frac14 \int_\Omega \int_\Omega J_\eps(x-y) (\varphi(x)-\varphi(y))^2 \,dx\, dy&\quad &\text{for } \varphi \in L^2(\Omega),\\
  E^0 (\varphi) &:= \frac12\int_\Omega |\nabla \varphi(x)|^2\,dx&\quad &\text{for } \varphi \in H^1(\Omega)
\end{alignat*}
the first parts of the free energies in the nonlocal and local case. We note that
\begin{equation*}
  E_\eps^0(\varphi) \leq E_\eps(\varphi) + C,\qquad E^0(\varphi) \leq E(\varphi)+C
\end{equation*}
for some $C>0$ independent of $\eps\in (0,1)$ since $F\colon [-1,1]\to \R$ is bounded below.
The following two lemmas will be important to obtain compactness as $\eps \to 0$:
\begin{lemma}\label{lem:3.3}
  For every $\varphi, \zeta\in H^1(\Omega)$ it holds that
  \begin{align*}
    \lim_{\eps\to 0} E^0_\eps(\varphi) &= E^0(\varphi),\\
    \lim_{\eps \to 0} \int_\Omega (a_\eps \varphi-J_\eps \ast \varphi)(x)\zeta (x) \,dx &= \int_\Omega \nabla \varphi(x)\cdot \nabla \zeta(x)\, dx.
  \end{align*}
  Moreover, for every sequence $(\varphi_\eps)_{\eps>0}\subseteq L^2(\Omega)$ and $\varphi \in L^2(\Omega)$ it holds that
  \begin{alignat*}{2}
    \sup_{\eps >0} E^0_\eps (\varphi) <+\infty & \quad \Rightarrow &\quad& (\varphi_\eps)_{\eps>0} \text{ is relatively compact in } L^2(\Omega),\\
    \varphi_\eps \to_{\eps\to 0} \varphi\quad \text{in } L^2(\Omega) &\quad \Rightarrow & \quad& E^0(\varphi)\leq \liminf_{\eps\to 0} E^0_\eps (\varphi_\eps).
  \end{alignat*}
\end{lemma}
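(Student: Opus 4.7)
The plan is to derive all four assertions from the Bourgain--Brezis--Mironescu / Ponce theory of nonlocal-to-local convergence for fractional-type energies on bounded domains, essentially following \cite{PonceNonlocalLocalConvergence,DavoliEtAlW11}. Using $J_\eps(z)=\eta_\eps(|z|)/|z|^2$ the singular factor cancels in the energy and one may rewrite
\[
E^0_\eps(\varphi)=\tfrac14\int_\Omega\int_\Omega \eta_\eps(|x-y|)\,\frac{|\varphi(x)-\varphi(y)|^2}{|x-y|^2}\,dx\,dy.
\]
The assumptions on $(\eta_\eps)$---radial symmetry, the normalization $\int_0^\infty \eta_\eps(r)r^{d-1}\,dr=2/C_d$, and the concentration $\int_\delta^\infty\eta_\eps(r)r^{d-1}\,dr\to 0$---are exactly the BBM/Ponce hypotheses, and the value $2/C_d$ is calibrated so that, after the $\tfrac14$ prefactor and the standard spherical average $\int_{S^{d-1}}|e_1\cdot\sigma|^2\,d\mathcal{H}^{d-1}=C_d$, the limit collapses to $\tfrac12\int_\Omega|\nabla\varphi|^2\,dx=E^0(\varphi)$, which gives the first convergence.

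For the second convergence I would use $a_\eps(x)=\int_\Omega J_\eps(x-y)\,dy$ and the symmetry of $J_\eps$ to rewrite the pairing as the associated bilinear form
\[
\int_\Omega (a_\eps\varphi-J_\eps\ast\varphi)\zeta\,dx=\tfrac12\int_\Omega\int_\Omega J_\eps(x-y)\bigl(\varphi(x)-\varphi(y)\bigr)\bigl(\zeta(x)-\zeta(y)\bigr)\,dx\,dy.
\]
Polarising the first convergence, i.e.\ applying it to $\varphi\pm\zeta$ and subtracting, immediately yields the limit $\int_\Omega\nabla\varphi\cdot\nabla\zeta\,dx$. Alternatively one may quote the bilinear version of Ponce's theorem directly.

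For the relative compactness and the lower semicontinuity I would appeal directly to the corresponding parts of Ponce's theorems: uniform boundedness of $E^0_\eps(\varphi_\eps)$ together with the $L^2$-bound yields uniform equicontinuity of translations of $\varphi_\eps$, hence relative $L^2$-compactness by Fr\'echet--Kolmogorov; the $\liminf$ inequality under $L^2$-convergence is then obtained by restricting the nonlocal energy to fixed small balls, passing to the limit against a smooth averaging kernel, and applying Fatou. The main obstacle, and the reason this lemma is singled out here, is that our kernels $J_\eps$ are \emph{singular} rather than $L^1$-mollifiers, so the cleanest BBM formulations do not apply verbatim; the $W^{1,1}$-kernel, Neumann-type, bounded-domain version developed in \cite{DavoliEtAlW11} (building on \cite{PonceNonlocalPoincare,PonceNonlocalLocalConvergence}) covers exactly our class, and after verifying that the family $(\eta_\eps)$ meets its hypotheses the four statements follow by quoting the results there.
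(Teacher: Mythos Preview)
Your proposal is correct and matches the paper's own treatment: the paper does not prove this lemma at all but simply refers to \cite[Lemma~3.3]{DavoliEtAlW11}, which is exactly the reference you ultimately invoke, and the heuristic sketch you give (BBM/Ponce convergence, polarisation for the bilinear form, Fr\'echet--Kolmogorov compactness, Fatou for the $\liminf$) is a fair summary of what happens there. One small correction: the kernels $J_\eps$ here are \emph{not} singular---the paper explicitly assumes $J_\eps\in W^{1,1}(\mathbb{R}^d)$ for every $\eps>0$---so you are already in the $W^{1,1}$-kernel setting of \cite{DavoliEtAlW11} and there is no ``obstacle'' to overcome beyond quoting that result.
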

\noindent We refer to \cite[Lemma~3.3]{DavoliEtAlW11} for the proof of this lemma.

\begin{lemma}\label{lem:3.4}
For any $\delta>0$, there exists some $C_{\delta}>0$ and $\eps_{\delta}>0$ such that for any $(\varphi_{\eps})_{\eps>0} \subset L^2(\Omega)$
\begin{equation}
\| \varphi_{\eps_1} - \varphi_{\eps_2} \|^2_{L^2(\Omega)} \leq \delta (E^0_{\eps_1}(\varphi_{\eps_1}) + E^0_{\eps_2}(\varphi_{\eps_2}))
+ C_{\delta} \| \varphi_{\eps_1} - \varphi_{\eps_2} \|^2_{(H^1(\Omega))'} 
\end{equation}  
holds for any $\eps_1, \eps_2 \in (0, \eps_{\delta})$. 
\end{lemma}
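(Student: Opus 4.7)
The natural approach is a contradiction argument combined with the compactness from Lemma~\ref{lem:3.3}. Suppose the claim fails: then there exists some $\delta>0$ such that for every $n\in\N$ one can find $\eps_1^n, \eps_2^n \in (0, 1/n)$ and $\varphi_i^n := \varphi_{\eps_i^n}\in L^2(\Omega)$ ($i=1,2$) violating the inequality with $C_\delta=n$. In particular $\|\varphi_1^n - \varphi_2^n\|_{L^2} > 0$, so dividing through by this quantity and using that $E^0_\eps(\cdot)$, $\|\cdot\|_{L^2}^2$ and $\|\cdot\|_{(H^1)'}^2$ are all $2$-homogeneous, I obtain normalized sequences $\tilde\varphi_i^n$ satisfying
\begin{equation*}
\|\tilde\varphi_1^n - \tilde\varphi_2^n\|_{L^2} = 1,\quad E^0_{\eps_i^n}(\tilde\varphi_i^n) \leq \tfrac{1}{\delta},\quad \|\tilde\varphi_1^n - \tilde\varphi_2^n\|_{(H^1(\Omega))'}^2 < \tfrac{1}{n}.
\end{equation*}

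To apply the compactness part of Lemma~\ref{lem:3.3}, I need each $\tilde\varphi_i^n$ to be bounded in $L^2(\Omega)$, not only their difference. Because $E^0_\eps$ is invariant under addition of a constant, I may shift both $\tilde\varphi_1^n$ and $\tilde\varphi_2^n$ by the same constant without affecting any of the quantities above. Testing $\tilde\varphi_1^n - \tilde\varphi_2^n$ against the normalized constant $|\Omega|^{-1/2}\in H^1(\Omega)$ gives $|m(\tilde\varphi_1^n)- m(\tilde\varphi_2^n)|\le |\Omega|^{-1/2}\|\tilde\varphi_1^n-\tilde\varphi_2^n\|_{(H^1(\Omega))'}\to 0$, where $m(u):=|\Omega|^{-1}\int_\Omega u\,dx$. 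Shifting by $-m(\tilde\varphi_1^n)$, I may thus assume $m(\tilde\varphi_1^n)=0$ and $m(\tilde\varphi_2^n)\to 0$. Combined with the uniform-in-$\eps$ non-local Poincar\'e inequality of Ponce~\cite{PonceNonlocalPoincare}, which yields $\|u - m(u)\|_{L^2}^2 \le C\,E^0_\eps(u)$ with $C$ independent of small $\eps$, this gives uniform $L^2$-bounds for both $(\tilde\varphi_i^n)_n$.

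Lemma~\ref{lem:3.3} now extracts subsequences (not relabeled) with $\tilde\varphi_i^n \to \varphi_i$ strongly in $L^2(\Omega)$ for some $\varphi_i \in L^2(\Omega)$. Passing to the limit in $\|\tilde\varphi_1^n - \tilde\varphi_2^n\|_{(H^1(\Omega))'}\to 0$ via the continuous embedding $L^2(\Omega)\hookrightarrow (H^1(\Omega))'$ forces $\varphi_1=\varphi_2$, so $\tilde\varphi_1^n-\tilde\varphi_2^n\to 0$ in $L^2(\Omega)$, contradicting $\|\tilde\varphi_1^n-\tilde\varphi_2^n\|_{L^2}=1$. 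The one delicate point in the argument is securing the individual $L^2$-boundedness of $\tilde\varphi_1^n$ and $\tilde\varphi_2^n$, since the hypothesis only controls the $L^2$-norm of their difference; that is where the uniform-in-$\eps$ non-local Poincar\'e inequality (which underlies Lemma~\ref{lem:3.3} itself) is essential.
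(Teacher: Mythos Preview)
The paper does not supply its own proof of this lemma; it simply refers to \cite[Lemma~3.4]{DavoliEtAlW11}. Your contradiction-plus-compactness argument is correct and is precisely the standard route used in that reference: normalize, exploit the translation invariance of $E^0_\eps$ together with Ponce's uniform nonlocal Poincar\'e inequality to secure individual $L^2$-bounds, then apply the compactness part of Lemma~\ref{lem:3.3} to reach a contradiction.
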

\noindent The lemma is proved in \cite[Lemma~3.4]{DavoliEtAlW11}.

\section{Main Result}\label{main}

\begin{theorem}\label{main-thm}
 For any $\eps \in (0,1)$ let $\mathbf{v}_{0,\eps} \in L_\sigma^2(\Omega)$ and $\varphi_{0,\eps} \in L^\infty(\Omega)$ with $|\varphi_{0,\eps}(x)| \leq 1$ almost everywhere, $\frac1{|\Omega|}\int_\Omega \varphi_{0,\eps}(x)\, dx = m_\Omega$ for all $\eps\in (0,1)$ and some $m_\Omega\in (-1,1)$. Moreover, we assume that there are $\ve_0\in L^2_\sigma(\Omega)$ and $\varphi_0 \in H^1(\Omega)$ such that $\ve_{0,\eps}\to_{\eps\to 0} \ve_0$ in $L^2_\sigma(\Omega)$, $\varphi_{0,\eps}\to_{\eps \to 0} \varphi_0$ in $L^2(\Omega)$, and
  \begin{equation*}
    \mathcal{E}_\eps (\ve_{0,\eps}, \varphi_{0,\eps})\to_{\eps \to 0} \mathcal{E}(\ve_0,\varphi_0),
  \end{equation*}
   where
    \begin{align*}
      \mathcal{E}(\ve,\varphi)&:= \frac12\int_\Omega \rho(\varphi)|\ve|^2\, dx + E(\varphi),\quad
      E(\varphi) := \frac12 \int_\Omega |\nabla \varphi(x)|^2 \,dx + \int_\Omega F(\varphi(x))\, dx.
    \end{align*}
If $\mathbf{v}_{\eps}$, $\varphi_{\eps}$ and $\mu_{\eps}$ are weak solutions of \eqref{eq:1}-\eqref{ic:1} with initial values $(\ve_{0,\eps}, \varphi_{0,\eps})$, then  
\begin{alignat}{2}\label{eq:Conv1}
\mathbf{v}_{\eps} &\rightharpoonup \mathbf{v}&\qquad &\text{weakly-}\ast \text{ in }L^{\infty}(0, T;L^2_{\sigma}(\Omega)), \\\label{eq:Conv2}
\mathbf{v}_{\eps} &\rightharpoonup \mathbf{v}&&\text{weakly in }L^2(0, T; H^1_0 (\Omega)^d), \\\label{eq:Conv3}
\mu_{\eps} &\rightharpoonup \mu && \text{weakly in }L^2(0, T;V),\\\label{eq:Conv4} 
\mathbf{v}_{\eps} &\rightarrow \mathbf{v} && \text{strongly in }L^2(0, T; L^2(\Omega)) \text{ and almost everywhere}, \\\label{eq:Conv5}
\varphi_{\eps} &\rightarrow \varphi && \text{strongly in  }C([0,T];L^2(\Omega)) \text{ and almost everywhere}
\end{alignat}
for a suitable subsequence $\eps=\eps_k\to_{k\to\infty} 0$,
where  $(\mathbf{v},\varphi,\mu)$ is a weak solution \eqref{eq:AGG1}-\eqref{ic:2} in the sense that
\begin{align*}
  \ve &\in BC_w([0,T];L^2_\sigma(\Omega))\cap L^2(0,T;H^1_0(\Omega)^d),\\
  \varphi & \in C([0,T]; L^2(\Omega))\cap BC_w([0,T]; H^1(\Omega))\cap L^2(0,T;H^2(\Omega)), F'(\varphi) \in L^2(0,T;L^2(\Omega)),\\
  \mu & \in L^2(0,T; H^1(\Omega)),
\end{align*}
$|\varphi(x,t)|<1$ almost everywhere in $Q$, and the following holds true:
\begin{enumerate}
\item $\mu = -\Delta \varphi + F'(\varphi)$ almost everywhere in $Q_T$.
  \item For every $\psi \in H^1_0(\Omega)^d\cap L^2_\sigma(\Omega)$ and $\bpsi\in \mathcal{D}(A)$ and almost every $t\in (0,T)$ we have
    \begin{align*}
      \duality{\partial_t (\rho\ve)}{\bpsi}_{\mathcal{D}(A)} &- \int_\Omega ( (\ve+ \tbj) \otimes \rho\ve : D\bpsi \,dx + \int_\Omega 2\nu(\varphi)D\ve:D\bpsi\, dx = -\int_\Omega \varphi \nabla \mu\cdot \bpsi \,dx,\\
      \duality{\partial_t \varphi}{\psi}_{H^1(\Omega)} &+ \int_\Omega m(\varphi) \nabla \mu\cdot \nabla \psi = \int_\Omega \ve \varphi \cdot \nabla \psi\,dx,
    \end{align*}
    where $\tbj= -\tfrac{\tilde{\rho}_2 - \tilde{\rho}_1}{2}m(\varphi) \nabla \mu$.
  \item The energy inequality
    \begin{equation}
      \label{eq:Energy2}
      \mathcal{E}(\ve(t),\varphi(t))+ \int_0^t\int_\Omega (2\nu(\varphi)|D\ve|^2+ m(\varphi)|\nabla \mu|^2 \,dx\, d\tau \leq \mathcal{E}(\ve_0,\varphi_0)
    \end{equation}
    holds true for all $t\in [0,T]$.
  \end{enumerate}
\end{theorem}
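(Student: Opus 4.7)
The plan is to combine the weak-solution existence strategy for the local AGG system from \cite{AbelsDepnerGarcke} with the nonlocal-to-local convergence toolbox of \cite{DavoliEtAlW11}. Concretely, I would (i) derive $\eps$-uniform a priori estimates from the energy inequality, (ii) extract weak/strong compactness and obtain the convergences \eqref{eq:Conv1}--\eqref{eq:Conv5}, (iii) identify the limit of the nonlocal chemical potential via Lemma~\ref{lem:3.3}, and (iv) pass to the limit in the variational identities and in the energy inequality.

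\emph{A priori estimates and compactness.} The energy inequality \eqref{eq:Energy}, together with the assumed convergence $\mathcal{E}_\eps(\ve_{0,\eps},\varphi_{0,\eps})\to \mathcal{E}(\ve_0,\varphi_0)$, gives uniform bounds on $\ve_\eps$ in $L^\infty(0,T;L^2_\sigma(\Omega))\cap L^2(0,T;H^1_0(\Omega)^d)$, on $t\mapsto E^0_\eps(\varphi_\eps(t))$, and on $\sqrt{m(\varphi_\eps)}\nabla\mu_\eps$ in $L^2(Q_T)$. Because the mean of $\varphi_\eps$ is the fixed value $m_\Omega\in(-1,1)$, the standard logarithmic-potential trick (testing the $\mu_\eps$-identity with $\varphi_\eps-m_\Omega$ and using the nonlocal Poincar\'e inequality of \cite{PonceNonlocalPoincare}) yields uniform bounds on $F'(\varphi_\eps)$ in $L^2(Q_T)$ and on $\mu_\eps$ in $L^2(0,T;H^1(\Omega))$. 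Testing the momentum and phase-field equations produces the estimates $\partial_t(\rho_\eps\ve_\eps)\in L^{4/3}(0,T;\mathcal{D}(A)')$ and $\partial_t\varphi_\eps\in L^2(0,T;(H^1(\Omega))')$ uniformly in $\eps$. Banach--Alaoglu gives the weak/weak-$\ast$ convergences \eqref{eq:Conv1}--\eqref{eq:Conv3}; an Aubin--Lions argument applied to $\rho_\eps\ve_\eps$ produces \eqref{eq:Conv4}. The strong convergence \eqref{eq:Conv5} follows from Lemma~\ref{lem:3.4}: the $t$-equicontinuity of $\varphi_\eps$ in $(H^1(\Omega))'$ coming from the time-derivative bound, combined with the uniform bound on $E^0_{\eps}(\varphi_\eps(t))$, shows that $(\varphi_\eps)$ is Cauchy in $C([0,T];L^2(\Omega))$ by choosing $\delta$ small. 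Consequently $\rho_\eps\to\rho(\varphi)$, $\nu(\varphi_\eps)\to\nu(\varphi)$, $m(\varphi_\eps)\to m(\varphi)$ strongly in every $L^p(Q_T)$, and $F'(\varphi_\eps)\to F'(\varphi)$ almost everywhere on $Q_T$.

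\emph{Identification of $\mu=-\Delta\varphi+F'(\varphi)$ and passage to the limit.} Testing the pointwise identity $\mu_\eps=a_\eps\varphi_\eps-J_\eps\ast\varphi_\eps+F'(\varphi_\eps)$ against an arbitrary $\zeta\in L^2(0,T;H^1(\Omega))$ and applying the second assertion of Lemma~\ref{lem:3.3} slice-wise in $t$ (integrating in time by dominated convergence from the uniform $L^2_t H^1_x$-bounds) yields in the limit
\[
  \int_0^T\!\!\int_\Omega \mu\,\zeta\,dx\,dt = \int_0^T\!\!\int_\Omega \nabla\varphi\cdot\nabla\zeta\,dx\,dt + \int_0^T\!\!\int_\Omega F'(\varphi)\,\zeta\,dx\,dt.
\]
Interpreting this identity with $\zeta$ ranging over all of $H^1(\Omega)$ encodes both $-\Delta\varphi=\mu-F'(\varphi)$ on $\Omega$ and the Neumann condition $\left.\tfrac{\partial\varphi}{\partial\no}\right|_{\partial\Omega}=0$; elliptic regularity (using the $C^2$-boundary) then upgrades $\varphi$ to $L^2(0,T;H^2(\Omega))$. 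The convective terms $(\ve_\eps+\tbj_\eps)\otimes\rho_\eps\ve_\eps:D\bpsi$ and $\ve_\eps\varphi_\eps\cdot\nabla\psi$ and the viscous term $2\nu(\varphi_\eps)D\ve_\eps:D\bpsi$ pass to the limit thanks to a strong/weak pairing of factors; the Korteweg force $\varphi_\eps\nabla\mu_\eps\cdot\bpsi$ closes similarly via strong convergence of $\varphi_\eps$ against weak convergence of $\nabla\mu_\eps$. Weak lower semicontinuity of the viscous and mobility dissipations and the lower-semicontinuity part of Lemma~\ref{lem:3.3} applied to $E^0_\eps(\varphi_\eps(t))\to E^0(\varphi(t))$, together with the hypothesis on the initial energies, yield \eqref{eq:Energy2}. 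The weak time-continuity $\ve\in BC_w([0,T];L^2_\sigma(\Omega))$ and $\varphi\in BC_w([0,T];H^1(\Omega))$ then follow from the time-derivative bounds combined with the uniform supremum-in-$t$ bound on $E^0(\varphi(t))$.

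\emph{Main obstacle.} The most delicate point is the identification step, because the nonlocal formulation imposes \emph{no} boundary condition on $\varphi_\eps$, yet the limit must satisfy the homogeneous Neumann condition $\partial_\no\varphi|_{\partial\Omega}=0$; this information is produced entirely by Lemma~\ref{lem:3.3}, which converts $a_\eps\varphi-J_\eps\ast\varphi$ into the Neumann Laplacian in the weak sense. A secondary subtlety is retaining the strict bound $|\varphi|<1$ a.e.\ in the limit, so that $F'(\varphi)$ makes sense and $\mu=-\Delta\varphi+F'(\varphi)$ can be read as an equality in $L^2$; this is ensured by the uniform $L^2$-bound on $F'(\varphi_\eps)$ obtained in Step~(i) combined with the singular logarithmic structure of $F$ near $\pm1$.
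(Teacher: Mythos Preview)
Your overall strategy matches the paper's, and most of the compactness steps are fine, but there is a genuine gap in the identification of the limit of the nonlocal term.

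You propose to pass to the limit in $\int_\Omega (a_\eps\varphi_\eps-J_\eps*\varphi_\eps)\zeta\,dx$ by ``applying the second assertion of Lemma~\ref{lem:3.3} slice-wise in $t$''. That assertion, however, is stated for a \emph{fixed} $\varphi\in H^1(\Omega)$: it says $\int_\Omega(a_\eps\varphi-J_\eps*\varphi)\zeta\to\int_\Omega\nabla\varphi\cdot\nabla\zeta$ as $\eps\to0$. You are applying it with $\varphi=\varphi_\eps(t)$, which varies with $\eps$, and the available a priori information does \emph{not} include a uniform bound on $\varphi_\eps$ in $L^2(0,T;H^1(\Omega))$; the energy inequality only controls $E^0_\eps(\varphi_\eps(t))$, and $\varphi_\eps\to\varphi$ holds merely in $C([0,T];L^2(\Omega))$. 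Attempting to symmetrize, $\int(a_\eps\varphi_\eps-J_\eps*\varphi_\eps)\zeta=\int\varphi_\eps(a_\eps\zeta-J_\eps*\zeta)$, does not help either: for a generic $\zeta\in H^1(\Omega)$ there is no $\eps$-uniform $L^2$-bound on $a_\eps\zeta-J_\eps*\zeta$, so you cannot pair it with the merely-$L^2$-convergent $\varphi_\eps$. In particular, the emergence of the Neumann condition $\partial_\no\varphi|_{\partial\Omega}=0$ --- which you correctly flag as the delicate point --- is not justified by your argument.

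The paper closes this gap by a Minty-type argument: one first shows $a_\eps\varphi_\eps-J_\eps*\varphi_\eps$ is bounded in $L^2(Q_T)$ (by testing \eqref{eq:4} with $F_0'(\varphi_\eps)$ and using the monotonicity of $F_0'$; note that testing with $\varphi_\eps-m_\Omega$, as you suggest, only gives $F_0'(\varphi_\eps)\in L^2(0,T;L^1(\Omega))$, which suffices for the mean of $\mu_\eps$ but not for the $L^2(Q_T)$ bound). Calling its weak limit $\eta$, one then exploits the convexity of $E^0_\eps$ to write, for any fixed $\psi\in L^2(0,T;H^1(\Omega))$,
\[
\int_0^T E^0_\eps(\varphi_\eps(t))\,dt+\int_{Q_T}(a_\eps\varphi_\eps-J_\eps*\varphi_\eps)(\psi-\varphi_\eps)\,dx\,dt\le\int_0^T E^0_\eps(\psi(t))\,dt,
\]
and passes to the limit using the \emph{first} assertion of Lemma~\ref{lem:3.3} ($E^0_\eps(\psi)\to E^0(\psi)$ for fixed $\psi$) on the right, and the lower-semicontinuity assertion on the left. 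Varying $\psi$ around $\varphi$ then identifies $\eta$ as the Neumann Laplacian of $\varphi$ in the weak sense, whence $\varphi\in L^2(0,T;H^2(\Omega))$, $\eta=-\Delta\varphi$, and $\partial_\no\varphi|_{\partial\Omega}=0$ by elliptic regularity.
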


\begin{proof}
From the energy inequality \eqref{eq:Energy}, we see that $(\mathbf{v}_{\eps})_{\eps \in (0,1)}$ is bounded in $L^{\infty}(0, T;L^2_{\sigma}(\Omega))$ and $L^2(0, T; H^1_{0}(\Omega)^d)$. Hence one can find a subsequence such that \eqref{eq:Conv1} and \eqref{eq:Conv2} hold. Moreover, since $|\varphi_\eps(x,t)|<1$ almost everywhere, $(\varphi_{\eps})_{\eps \in (0,1)}$ is obviously bounded in $L^{\infty}(0, T; L^2(\Omega))$. We also see from the energy inequality that $(\nabla \mu_{\eps})_{\eps\in (0,1)}$ is bounded in $L^2(0, T;L^2(\Omega))$. To see that 
$(\mu_{\eps})_{\eps (0,1)}$ is bounded in $L^2(0, T;H^1(\Omega))$, we know from the Poincar\'e-Wirtinger inequality that it is enough to show that
$(\mu)_{\Omega}:=\frac{1}{|\Omega|} \int_{\Omega} \mu\,d x \in L^2(0, T)$. 
The argument below for showing this are an adaptation of the arguements in Section~4.1 of \cite{DavoliEtAlW11}. We include it for the reader's convenience.

For the following we define
$$
\mathcal{N}(\varphi_\eps(t))\colon (H^1_{(0)}(\Omega))'\to H^1_{(0)}(\Omega):= \left\{u\in H^1(\Omega): \int_\Omega u\,d x=0\right\}\colon f\mapsto u, 
$$
where $u\in H^1_{(0)}(\Omega)$ is the solution of
\begin{equation*}
  \int_\Omega m(\varphi_\eps (t))\nabla u \cdot \nabla \psi \, dx = \duality{f}{\psi}\qquad \text{for all }\psi\in H^1_{(0)}(\Omega).
\end{equation*}
Since $m$ is strictly bounded below (independent of $\varphi_\eps(t)$), there is some constant $C$, independent of $\varphi_\eps (t)$, such that
\begin{equation*}
  \|\mathcal{N}(\varphi_\eps(t))f\|_{H^1(\Omega)}\leq C\|f\|_{(H^1_{(0)}(\Omega))'}\qquad \text{for all }f\in (H^1_{(0)}(\Omega))'. 
\end{equation*}
 Then testing \eqref{eq:3} by $\mathcal{N}(\varphi_\eps(t))(\varphi_{\eps}(t) - m_{\Omega})$ (in the weak sense), \eqref{eq:4} with $\varphi_{\eps}(t)-m_\Omega$ and taking the sum yields
\begin{align}
& \duality{\partial_t \varphi_{\eps}(t)}{\mathcal{N}(\varphi_\eps(t))(\varphi_{\eps}(t) - m_\Omega)}_{H^1_{(0)}(\Omega)} + 2E^0_{\eps}(\varphi_{\eps}(t)) 
+ \int_{\Omega} F'_0(\varphi_{\eps}(x, t))(\varphi_{\eps}(x, t) - m_{\Omega})\, dx \nonumber \\
& = \int_{\Omega} \varphi_{\eps}(x, t) \mathbf{v}_{\eps}(x, t) \cdot \nabla \mathcal{N}(\varphi_\eps (t))(\varphi_{\eps}(x, t) - m_{\Omega})\, dx - \int_{\Omega} \theta_c \varphi_{\eps}(x, t) (\varphi_{\eps}(x, t) - m_{\Omega})\, dx, \label{testing eq:1}  
\end{align}
where
$$
F_0(s):= F(s)+\theta_c \frac{s^2}2\qquad \text{for }s\in [-1,1]
$$
is the ``convex part'' of $F$. Using the weak form of \eqref{eq:3} it is easy to see that $\partial_t \varphi_{\eps} $ is bounded in $L^2(0, T;(H^1(\Omega))')$ since $\nabla \mu_\eps$ and $\varphi_\eps \ve_\eps$ are bounded in $L^2(0,T;L^2(\Omega)^d)$. 
Using this and the energy inequality, we observe that the first term in the left-hand side of \eqref{testing eq:1} is bounded in $L^2(0, T)$ independently of $\eps$ and the second is non-negative. Using the properties of $\mathcal{N}$ and the H\"older inequality, the right-hand side of \eqref{testing eq:1} can be estimated from above by a constant multiple of
\begin{align}
& \| \varphi_{\eps} \|_{L^{\infty}(\Omega)} \| \mathbf{v}_{\eps} \|_{L^2(\Omega)} \| \varphi_{\eps} - m_{\Omega} \|_{(H^1_{(0)}(\Omega))'} 
+ \theta_c \| \varphi_{\eps} \|_{L^2(\Omega)} \| \varphi_{\eps} - m_{\Omega} \|_{L^2(\Omega)}.  
\end{align}
Hence they are bounded in $L^{\infty}(0, T)$. Moreover, using the estimate
in the last line of p.\ 462 in \cite{AbelsDepnerGarcke}, 
there exist constants $c_1$ and $c_2$ such that 
\begin{align}
& \int_{\Omega} F'_0(\varphi_{\eps}(x, t))(\varphi_{\eps}(x, t) - m_{\Omega}) dx \geq c_1 \| F'_0(\varphi_{\eps}) \|_{L^1(\Omega)} - c_2.
\end{align}
Combining these estimates and \eqref{testing eq:1}, we have that $F'_0(\varphi_\eps)$ is bounded in $L^2(0,T;L^1(\Omega))$. Moreover, integrating $\mu_\eps = a_\eps \varphi_\eps - J_\eps \ast \varphi_\eps + F'(\varphi_\eps)$ in $\Omega$, yields that
\begin{equation*}
 (\mu_{\eps})_{\Omega} = \frac1{|\Omega|}\int_\Omega (F'_0(\varphi_\eps)-\theta_c \varphi_\eps)\, dx
\end{equation*}
is bounded in $L^2(0,T)$. Hence $(\mu_\eps)_{\eps \in (0,1)}$ is bounded in $L^2(0,T;H^1(\Omega))$ and we can choose a subsequence such that \eqref{eq:Conv3} holds.

Next we show \eqref{eq:Conv5} for a suitable subsequence.
As seen before $ (\partial_t \varphi_{\eps})_{\eps \in (0,1)} \subseteq L^2(0, T;(H^1(\Omega))') $ is bounded. Furthermore $ (\varphi_{\eps})_{\eps\in (0,1)} $ is bounded in $L^{\infty}(0, T;L^2(\Omega))$ since 
$ |\varphi_\eps(x, t)|<1$ almost everywhere in $Q_T$. Since $L^2(\Omega)$ is compactly embedded in $(H^1(\Omega))'$,  we have $ \varphi_{\eps} \rightarrow \varphi $ in $C([0, T]; (H^1(\Omega))')$ for a suitable subsequence by the Aubin-Lions lemma. Using Lemma~\ref{lem:3.4} and the bounds on the energies, we have $ \varphi_{\eps} \rightarrow \varphi $ in $C([0, T]; L^2(\Omega))$ and almost everywhere for a suitable subsequence. 
Since the function $\rho(\varphi_\eps)$ is bounded and depends continuously on $\varphi_\eps$, using Lebesgue's dominated convergence theorem, we have
$ \rho(\varphi_{\eps}) \rightarrow \rho(\varphi)$ strongly in $L^{q}(\Omega) $ for any $1 \leq q < \infty.$ Using the energy inequality, $\mathbf{v}_{\eps}$ is uniformly bounded in $L^{\infty}(0, T; L^2_\sigma(\Omega)) \cap L^2(0, T;L^6(\Omega)^d)$ and hence also in $L^{\frac{10}{3}}(Q_T)^d$. Thus
$\mathbf{v}_{\eps} \rightharpoonup \mathbf{v}$ weakly in $L^{\frac{10}{3}}(Q_T)^d$. Combining these convergence result, we derive $\rho(\varphi_{\eps}) \mathbf{v}_{\eps} \rightharpoonup \rho(\varphi)\mathbf{v}$ weakly in $L^{\frac{10}{3}-\gamma}(Q_T)^d$ for any $0<\gamma<\frac{10}{3}$ and a suitable subsequence. 
This implies
\begin{equation*}
  \rho(\varphi_{\eps}) \mathbf{v}_{\eps} \rightharpoonup \rho(\varphi) \mathbf{v}\qquad \text{ weakly in }L^2(0, T;L^2(\Omega)^d).  
\end{equation*}
 Since the Helmholtz projection $\mathbb{P}_{\sigma}$ is weakly continuous in $L^2(0, T; L^2(\Omega)^d)$, we obtain
 \begin{equation*}
   \mathbb{P}_{\sigma}(\rho(\varphi_{\eps})\mathbf{v}_{\eps}) \rightharpoonup \mathbb{P}_{\sigma}(\rho(\varphi)\mathbf{v})\qquad \text{ weakly in }L^2(0, T; L^2(\Omega)^d).  
 \end{equation*}
Using the weak form of \eqref{eq:1}, we have
\begin{align}\nonumber
  \duality{\partial_t (\mathbb{P}_{\sigma}(\rho_{\eps} \mathbf{v}_{\eps})(t)}{\bpsi}_{\mathcal{D}(A)} &- \int_\Omega \rho_{\eps}\mathbf{v}_{\eps} \otimes (\mathbf{v}_{\eps}+\tbj_\eps)): D\bpsi\, dx\\\label{eq:TimeDerivative}
  &- \int_\Omega 2\nu(\varphi_{\eps}) D\mathbf{v}_{\eps}: D\bpsi \,dx   = - \int_\Omega \varphi_{\eps}\nabla \mu_{\eps} \cdot \bpsi\,dx
\end{align}
for all $\bpsi\in \mathcal{D}(A)$ and almost every $t\in (0,T)$.
Since $\mathbb{P}_{\sigma}$ is bounded in $L^2(\Omega)^d$,  $ \mathbb{P}_{\sigma} \left(\rho_{\eps} \mathbf{v}_{\eps} \right) $ is bounded in $L^2(0, T; L^2(\Omega)^d)$. 
Moreover, $ \rho_{\eps} \mathbf{v}_{\eps} \otimes \mathbf{v}_{\eps}
$ is bounded in $ L^2(0, T; L^{\frac32}(\Omega)^{d\times d}) $ and $\mathbf{v}_{\eps} \otimes \tbj_{\eps}$ is bounded in $L^{\frac87}(0, T; L^{\frac43}(\Omega)^{d\times d})$ since $\tbj_\eps= -\tfrac{\tilde{\rho}_2-\tilde{\rho}_1}2 m(\varphi_\eps) \nabla \varphi_\eps$ is bounded in $L^2(0,T;L^2(\Omega)^d)$ and $\ve_\eps$ is bounded in $L^{\frac83}(0,T;L^4(\Omega)^d)$. Using these bounds and the boundedness of $2 \eta(\varphi_{\eps}) D\ve_{\eps}$ in $L^2(0, T; L^2(\Omega)^{d\times d})$, we have that $\partial_t \left( \mathbb{P}_{\sigma}(\rho_{\eps} \mathbf{v}_{\eps})  
\right) $ is bounded in $ L^{\frac87}(0, T; W^{-1}_{\frac43, \sigma}(\Omega)) $ because of \eqref{eq:TimeDerivative}, where $W^{-1}_{\frac43,\sigma}(\Omega)= ( W^1_{4,0}(\Omega)\cap L^2_\sigma(\Omega))'$. Since $L^2_\sigma(\Omega)$ is compactly embedded in $H^{-1}_\sigma (\Omega):= (H^1_0(\Omega)^d\cap L^2_\sigma(\Omega))'$ and $H^{-1}_\sigma (\Omega)$ is continuously embedded in $ W^{-1}_{\frac43,\sigma }(\Omega)
$, the Aubin-Lions' lemma yields that
\begin{equation*}
 \mathbb{P}_{\sigma} (\rho_{\eps} \mathbf{v}_{\eps})\to\we_1 \qquad \text{in } L^2(0, T; H^{-1}_{\sigma}(\Omega))
\end{equation*}
for  some $\we_1$ in $L^2(0, T; H^{-1}_{\sigma}(\Omega))$ and a suitable subsequence. Since $\mathbb{P}_{\sigma}(\rho(\varphi_{\eps})\mathbf{v}_{\eps}) \rightharpoonup \mathbb{P}_{\sigma}(\rho(\varphi)\mathbf{v}) $ weakly in $L^2(0, T; L^2(\Omega))$, $\we_1 = \mathbb{P}_{\sigma}(\rho(\varphi)\mathbf{v})$. 
Hence we have
\begin{equation} \label{str-conv-1}
\mathbb{P}_{\sigma}(\rho_{\eps} \mathbf{v}_{\eps}) \rightarrow \mathbb{P}_{\sigma}(\rho \mathbf{v}) \qquad \text{in } L^2(0, T; H^{-1}_{\sigma}(\Omega))
\end{equation}
Because of the boundedness of $\partial_t \left( \mathbb{P}_{\sigma}(\rho_{\eps} \mathbf{v}_{\eps})  
\right) $ in $ L^{\frac87}(0, T; W^{-1}_{\frac43, \sigma}(\Omega)) $ and $\mathbb{P}_{\sigma}(\rho(\varphi_{\eps})\mathbf{v}_{\eps}) \rightharpoonup \mathbb{P}_{\sigma}(\rho(\varphi)\mathbf{v}) $ weakly in $L^2(0, T; L^2(\Omega))$, we have
\begin{equation*}
  \partial_t \left( \mathbb{P}_{\sigma}(\rho_{\eps} \mathbf{v}_{\eps}) \right) \rightharpoonup \partial_t \left( \mathbb{P}_{\sigma}(\rho \mathbf{v}) \right)\qquad \text{ weakly in }L^{\frac87}(0, T; W^{-1}_{\frac43, \sigma}(\Omega)).
\end{equation*} 
Using also the boundedness of $\mathbf{v}_{\eps}$ in $L^2(0, T;H^1_0(\Omega)^d)$, we conclude that $\mathbf{v}_{\eps}$ converges weakly to $\mathbf{v}$  in  $L^2(0, T;H^1_0(\Omega)^d)$ for some subsequence.
Combining this with \eqref{str-conv-1}, we obtain
\begin{equation*}
\int_{Q_T} \rho_{\eps} |\mathbf{v}_{\eps}|^2 \,d(x,t) = 
\int_{Q_T} \mathbb{P}_{\sigma}(\rho_{\eps} \mathbf{v}_{\eps}) \cdot \mathbf{v}_{\eps}\, d(x,t) \to  
\int_{Q_T} \mathbb{P}_{\sigma}(\rho \mathbf{v}) \cdot \mathbf{v} \, d(x,t) = \int_{Q_T} \rho |\mathbf{v}|^2\, d(x,t).
\end{equation*}
Together with the weak convergence of $\ve_\eps$ and $\rho_{\eps}^{\frac12} \mathbf{v}_{\eps}$ in $L^2(Q_T)^d$, we conclude that $\rho_{\eps}^{\frac12} \mathbf{v}_{\eps} \to \rho^{\frac12} \mathbf{v}$ strongly in $L^2(0, T; L^2(\Omega)^d)$. Moreover, since $\rho(\varphi_{\eps}) \to \rho(\varphi)$ almost everywhere,  $\rho_{\eps}\geq c$ for some $c>0$, the convergence $\rho_{\eps}^{\frac12} \mathbf{v}_{\eps} \to \rho^{\frac12} \mathbf{v}$  in $L^2(0, T; L^2(\Omega)^d)$ implies $\mathbf{v}_{\eps} \to \mathbf{v}$ in $L^2(0, T; L^2(\Omega)^d)$, i.e., \eqref{eq:Conv4} holds true.   

Since $\mathbf{v}_{\eps}\otimes\mathbf{v}_{\eps}$ is bounded in $L^{\frac{5}{3}}(Q_T)^{d\times d}$, it converges weakly to some $\we_2$ in $L^{\frac{5}{3}}(Q_T)^{d\times d}$. On the other hand, since $\mathbf{v}_{\eps} \rightarrow \mathbf{v}$ in $L^2(Q_T)^d$, $\mathbf{v}_{\eps} \otimes \mathbf{v}_{\eps} \rightarrow \mathbf{v} \otimes \mathbf{v}$ in $L^1(Q_T)^{d\times d}$. Hence $\we_2 = \mathbf{v} \otimes \mathbf{v}$. This means $\mathbf{v}_{\eps}\otimes\mathbf{v}_{\eps}$ converges weakly to $\mathbf{v} \otimes \mathbf{v}$ in $L^{\frac{5}{3}}(Q)^{d\times d}$. Since $\rho(\varphi_{\eps})$ converges strongly to 
$\rho(\varphi)$ in $L^p(Q_T)$ for any $1\leq p<\infty$, we conclude that
$\rho(\varphi_{\eps}) \mathbf{v}_{\eps} \otimes \mathbf{v}_{\eps}$ converges weakly to $\rho(\varphi) \mathbf{v} \otimes \mathbf{v}$ in $L^{\frac{5}{3}-\gamma}(Q_T)^{d\times d}$ for any $\gamma\in (0,\tfrac53)$. 
Furthermore, since $\nu(\varphi_{\eps}) D\mathbf{v}_{\eps}$ is bounded in $L^2(Q_T)^{d\times d}$, it converges weakly to some $\we_3$ in $L^2(Q_T)^{d\times d}$. On the other hand, since $D\mathbf{v}_{\eps}$ converges weakly to $D\mathbf{v}$ in $L^2(Q_T)^{d\times d}$ and $\nu(\varphi_{\eps})$ converges strongly to $\nu(\varphi)$ in $L^p(Q)$ for any $1\leq p<\infty$, $\nu(\varphi_{\eps}) D\mathbf{v}_{\eps}$ converges weakly to $\nu(\varphi) D\mathbf{v}$ in $ L^{2-\gamma}(Q_T)^{d\times d} $ for 
any $\gamma\in (0,2)$. Hence $\we_3 = \nu(\varphi) D\mathbf{v}$. Similarly as above, one shows $\tbj_{\eps} = - \beta m(\varphi_{\eps}) \nabla \mu_{\eps} \rightharpoonup \tbj = - \beta m(\varphi) \nabla \mu$ in $L^2(Q_T)^d$. Using this together with $\mathbf{v}_{\eps} \to \mathbf{v}$ in $L^2(0, T; L^2(\Omega)^d)$, we have that $\mathbf{v}_{\eps}\otimes \tbj_{\eps} \rightharpoonup \mathbf{v} \otimes \tbj$ in $L^1(Q_T)^{d\times d}$.
Similarly as above, we see $\varphi_{\eps} \nabla \mu_{\eps} \rightharpoonup \varphi \nabla \mu$ in $L^2(Q)^d$. Hence we can pass to the limit in the weak form of \eqref{eq:1}.

Since $ \partial_t \varphi_{\eps} $ is bounded in $L^2(0, T;(H^1(\Omega))')$, $\partial_t \varphi_{\eps}$ converges weakly to $\partial_t \varphi$ in $L^2(0, T;(H^1(\Omega))')$.  
Since $\mathbf{v}_{\eps} \rightarrow \mathbf{v}$ strongly in $L^2(Q_T)^d$ and
$\varphi_{\eps} \to \varphi$ almost everywhere and is uniformly bounded, we have that $\mathbf{v}_{\eps} \varphi_{\eps} \to \mathbf{v} \varphi$ strongly in $L^2(Q_T)^d$ by Lebesgue's dominated convergence theorem. We also have $ m(\varphi_{\eps}) \nabla \mu_{\eps}$ converges weakly to $ m(\varphi) \nabla \mu $ in $L^2(Q)^d$. Thus we can pass to the limit in the weak form of \eqref{eq:3}. 

The following argument is from Chapter 5  in \cite{DavoliEtAlW11}. We repeat the argument for the  convenience of the reader. 
Testing \eqref{eq:4} by $F'_0(\varphi_{\eps})$, taking into account that $(\mu_{\eps})_{\eps\in (0,1)}$ is bounded  in $L^2(0, T; L^2(\Omega))$ and using the monotonicity of $ F'_0 $, we derive that
\begin{align}
\| F'_0(\varphi_\eps) \|_{L^2(0, T; L^2(\Omega))} \leq M \label{est:xi}
\end{align}
for some $M>0$ independent of $\eps \in (0,1)$.
From this and \eqref{eq:4}, we have
\begin{align}
\| a_{\eps} \varphi_{\eps} - J_{\eps} * \varphi_{\eps} \|_{L^2(0, T; L^2(\Omega))} \leq M. \label{est:J}
\end{align}
Because of \eqref{est:xi} and \eqref{est:J}, there exist
$
\xi, \eta \in L^2(0, T; L^2(\Omega)) 
$
such that
\begin{alignat*}{2}
F_0'(\varphi_\eps) &\rightharpoonup \xi &\qquad &\text{in }L^2(0, T; L^2(\Omega)), \\
a_\eps \varphi_{\eps} - J_{\eps} \ast \varphi_{\eps} &\rightharpoonup \eta &&  \text{in }L^2(0, T; L^2(\Omega))
\end{alignat*}
for a suitable subsequence.
Using $\varphi_{\eps} \rightarrow \varphi$ in $C([0, T]; L^2(\Omega))$ one can deduce $F'_0(\varphi_\eps)\to F'_0(\varphi)$ almost everywhere, cf. e.g.~\cite[page 1093]{AbelsBosiaGrasselli}, and therefore in $L^q(Q_T)$ for every $1\leq q <2$.

Passing to the limit in the weak formulation of \eqref{eq:3} 
we have 
\begin{align*}
\duality{\partial_t \varphi(t)}{\psi(t)}_{H^1(\Omega)} + \int_{\Omega}m(\varphi(x,t)) \nabla \mu(x, t) \cdot \nabla \psi(x) dx = \int_{\Omega} \varphi(x, t) \mathbf{v}(x, t) \cdot \nabla 
\psi(x) dx
\end{align*} 
for every $\psi \in H^1(\Omega)$ and for almost every $t\in (0, T)$ and that 
$\mu = \eta + \xi - \theta_c \varphi$.
It only remains to show that $ \varphi \in L^{\infty}(0, T; H^1(\Omega)) \cap L^2(0,T; H^2(\Omega)) $ and $\eta = - \Delta \varphi$.

Because of $ \varphi_{\eps} \rightarrow \varphi $ in $C([0, T]; L^2(\Omega))$, 
Lemma~\ref{lem:3.3}, and the energy estimate, we conclude
\begin{align*}
\| E^0(\varphi) \|_{L^{\infty}(0, T)} \leq \liminf_{\eps \rightarrow 0} \| E_{\eps}^0(\varphi_{\eps}) \|_{L^{\infty}(0, T)} \leq M 
\end{align*}
Hence $ \varphi \in L^{\infty}(0, T; H^1(\Omega))$. Since $E_\eps^0(\varphi_\eps)$ is quadratic in $\varphi_\eps$, we have
\begin{align*}
\int_0^T E_{\eps}^0(\varphi_{\eps}(t))\, dt + \int_{Q_T} (a_\eps \varphi_{\eps} - J_{\eps} * \varphi_{\eps})(t, x)(\psi - \varphi_{\eps})\, dx\, dt \leq \int_0^T E^0_{\eps}(\psi(t))\, dt
\end{align*}
for any $\psi\in H^1(\Omega)$ with $(\psi)_\Omega=m$. 
Since $\varphi_{\eps} \rightarrow \varphi$ in $C([0, T]; L^2(\Omega))$, using Lemma~\ref{lem:3.3} and Fatou's lemma, we derive 
\begin{align*}
\frac{1}{2} \int_{Q_T} |\nabla \varphi(t, x)|^2 \,dx\, dt + \int_{Q_T} \eta(t, x) (\psi - \varphi)(t, x) d(x,t) \leq \frac{1}{2} \int_{Q_T} |\nabla \psi(t, x) |^2\, d(x,t)
\end{align*}
for every $\psi \in L^2(0, T; H^1(\Omega))$ with $(\psi(t))_\Omega=m$ for almost every $t\in (0,T)$. 

If we take $\psi(t, x) = \varphi(t, x) + h \chi(t) \tau(x)$, where $h \in \mathbb{R}$, $\chi \in C([0, T])$ and $\tau \in H^1_{(0)}(\Omega)$, and passing to the limit $h \rightarrow 0$,
we obtain 
\begin{align*}
\int_{\Omega} \eta(x, t) \tau(x) dx = \int_{\Omega} \nabla \varphi(x, t) \cdot \nabla \tau(x) dx
\end{align*}
for a.e.\ $t \in (0, T)$ and for all $\tau \in H^1_{(0)}(\Omega)$.  
By  classical elliptic regularity theory, we conclude that $\varphi \in L^2(0, T; H^2(\Omega))$ and $\eta = - \Delta \varphi$ and $\frac{\partial \varphi}{\partial \mathbf{n}}|_{\partial\Omega} = 0$. Furthermore, since $\varphi \in L^\infty(0,T;H^1(\Omega))\cap H^1(0,T;(H^1(\Omega))')$, 
we have $\varphi \in BC_w([0,T]; H^1(\Omega))$, cf.\ e.g.\ \cite[Lemma~4.1]{LTModel}. Moreover, using the same arguments as in \cite[Section~5.2]{AbelsDepnerGarcke} one shows $\ve \in BC_w([0,T]; L^2_\sigma(\Omega))$ and $\ve|_{t=0}=\ve_0$.

Finally, we prove the energy inequality for the limit $(\mathbf{v}, \varphi, \mu)$. Using \eqref{eq:Energy}, we obtain
\begin{align}\label{eq:Energy-epsilon}
\mathcal{E}_\eps(\ve_{\eps}(t),\varphi_{\eps}(t))+ \int_{Q_t} (2\nu(\varphi_{\eps})|D\ve_{\eps}|^2+ m(\varphi_{\eps})|\nabla \mu_{\eps}|^2) \,d(x,\tau) \leq \mathcal{E}_\eps(\ve_{\eps}(0),\varphi_{\eps}(0))
\end{align}
If we take liminf of both sides of \eqref{eq:Energy-epsilon} as 
$\eps \searrow 0$, we have
\begin{align*}
\liminf_{\eps \searrow 0} \mathcal{E}_{\eps}(\ve_{\eps}(t),\varphi_{\eps}(t))+ \liminf_{\eps \searrow 0} \int_{Q_t} (2\nu(\varphi_{\eps})|D\ve_{\eps}|^2+ m(\varphi_{\eps})|\nabla \mu_{\eps}|^2) \,d(x,\tau) \leq \lim_{\eps \searrow 0} \mathcal{E}_\eps(\ve_{\eps}(0),\varphi_{\eps}(0)),
\end{align*} 
where from our assumption on the sequence of the initial data, we conclude  
\begin{align*}
\liminf_{\eps \searrow 0} \mathcal{E}_\eps(\ve_{\eps}(0),\varphi_{\eps}(0)) 
= \lim_{\eps \searrow 0} \mathcal{E}_\eps(\ve_{0,\eps},\varphi_{0,\eps})
= \mathcal{E}(\ve_0,\varphi_0).
\end{align*}
For almost all $t \in (0, T)$, we have 
\begin{align*}
 \mathcal{E}(\ve(t),\varphi(t))\leq \liminf_{\eps \searrow 0} \mathcal{E}_{\eps}(\ve_{\eps}(t),\varphi_{\eps}(t)) 
\end{align*}
because of $\ve_{\eps}(t) \rightarrow \ve(t) $ in $ L^2(\Omega)^d$, $\varphi_{\eps}(t)\to_{\eps\to 0} \varphi(t)$ in $L^2(\Omega)$ for almost every  $t\in (0, T)$, and Lemma~\ref{lem:3.3}.
Furthermore, for any $ t \in (0, T)$ we obtain
\begin{align*}
&\int_{Q_t} (2\nu(\varphi)|D\ve|^2+ m(\varphi)|\nabla \mu|^2) \,d(x,\tau) \leq \liminf_{\eps \searrow 0} \int_{Q_t} (2\nu(\varphi_{\eps})|D\ve_{\eps}|^2+ m(\varphi_{\eps})|\nabla \mu_{\eps}|^2) \,d(x,\tau)
\end{align*}
using weak lower semicontinuity of norms  
and
$\nu(\varphi_{\eps})^{\frac12} D \ve_{\eps} \rightharpoonup \nu(\varphi)^{\frac12} D \ve$ weakly in $L^2(0, T; L^2(\Omega))$ and 
$m(\varphi_{\eps})^{\frac12} \nabla \mu_{\eps} \rightharpoonup 
m(\varphi)^{\frac12} \nabla \mu$ weakly in $L^2(0, T; L^2(\Omega))$. In summary, we have shown \eqref{eq:Energy2} for almost every $t\in (0,T)$. But using $\ve\in BC_w([0,T];L^2_\sigma(\Omega))$, $\varphi \in C([0, T]; L^2(\Omega)) \cap BC_w([0,T];H^1(\Omega))$, hence $\rho^{\frac{1}{2}} \ve \in  BC_w([0, T]; L^2(\Omega))$, and suitable properties of $\mathcal{E}$ which concerns continuity or weak lower semi-continuity of each terms, we finally obtain \eqref{eq:Energy2} for every $t\in [0,T]$ by a density argument.    
This completes the proof of Theorem~\ref{main-thm}.   
\end{proof}


\section*{Acknowledgments}
The second author has been supported by JSPS KAKENHI number 17K17804.
This support is gratefully acknowledged.



\begin{thebibliography}{10}

\bibitem{LTModel}
{\sc H.~Abels}, {\em Existence of weak solutions for a diffuse interface model
  for viscous, incompressible fluids with general densities}, Comm. Math.
  Phys., 289 (2009), pp.~45--73.

\bibitem{AbelsBosiaGrasselli}
{\sc H.~Abels, S.~Bosia, and M.~Grasselli}, {\em Cahn-{H}illiard equation with
  nonlocal singular free energies}, Ann. Mat. Pura Appl. (4), 194 (2015),
  pp.~1071--1106.

\bibitem{AbelsDepnerGarcke}
{\sc H.~Abels, D.~Depner, and H.~Garcke}, {\em Existence of weak solutions for
  a diffuse interface model for two-phase flows of incompressible fluids with
  different densities}, J. Math. Fluid Mech., 15 (2013), pp.~453--480.

\bibitem{AbelsGarckeGruen2}
{\sc H.~Abels, H.~Garcke, and G.~Gr{\"{u}}n}, {\em Thermodynamically
  consistent, frame indifferent diffuse interface models for incompressible
  two-phase flows with different densities}, Math. Models Methods Appl. Sci.,
  22 (2012), p.~1150013 (40 pages).

\bibitem{AbelsTerasawaNonlocalAGG}
{\sc H.~Abels and Y.~Terasawa}, {\em Weak solutions for a diffuse interface
  model for two-phase flows of incompressible fluids with different densities
  and nonlocal free energies}, Math. Methods Appl. Sci., 43 (2020),
  pp.~3200--3219.

\bibitem{AbelsWeberAGG}
{\sc H.~Abels and J.~Weber}, {\em Local well-posedness of a
  quasi-incompressible two-phase flow}, J. Evol. Equ., 21 (2021),
  pp.~3477--3502.

\bibitem{DavoliEtAlNonlocalLocalCHPeriodic}
{\sc E.~Davoli, H.~Ranetbauer, L.~Scarpa, and L.~Trussardi}, {\em Degenerate
  nonlocal {C}ahn-{H}illiard equations: well-posedness, regularity and local
  asymptotics}, Ann. Inst. H. Poincar\'{e} Anal. Non Lin\'{e}aire, 37 (2020),
  pp.~627--651.

\bibitem{DavoliEtAlW11}
{\sc E.~Davoli, L.~Scarpa, and L.~Trussardi}, {\em Local asymptotics for
  nonlocal convective {C}ahn-{H}illiard equations with {$W^{1,1}$} kernel and
  singular potential}, J. Differential Equations, 289 (2021), pp.~35--58.

\bibitem{DavoliEtAlNonlocalLocalCHViscosity}
\leavevmode\vrule height 2pt depth -1.6pt width 23pt, {\em Nonlocal-to-local
  convergence of {C}ahn-{H}illiard equations: {N}eumann boundary conditions and
  viscosity terms}, Arch. Ration. Mech. Anal., 239 (2021), pp.~117--149.

\bibitem{FrigeriNonlocalAGG}
{\sc S.~Frigeri}, {\em Global existence of weak solutions for a nonlocal model
  for two-phase flows of incompressible fluids with unmatched densities}, Math.
  Models Methods Appl. Sci., 26 (2016), pp.~1955--1993.

\bibitem{FrigeriNonlocalDegAGG}
\leavevmode\vrule height 2pt depth -1.6pt width 23pt, {\em On a nonlocal
  {C}ahn-{H}illiard/{N}avier-{S}tokes system with degenerate mobility and
  singular potential for incompressible fluids with different densities}, Ann.
  Inst. H. Poincar\'{e} Anal. Non Lin\'{e}aire, 38 (2021), pp.~647--687.

\bibitem{FigeriGalGrasselliNonlocalCH}
{\sc S.~Frigeri, C.~G. Gal, and M.~Grasselli}, {\em Regularity results for the
  nonlocal {C}ahn-{H}illiard equation with singular potential and degenerate
  mobility}, J. Differential Equations, 287 (2021), pp.~295--328.

\bibitem{GalGrasselliWu}
{\sc C.~G. Gal, M.~Grasselli, and H.~Wu}, {\em Global weak solutions to a
  diffuse interface model for incompressible two-phase flows with moving
  contact lines and different densities}, Arch. Ration. Mech. Anal., 234
  (2019), pp.~1--56.

\bibitem{GiorginWellposednessAGG}
{\sc A.~Giorgini}, {\em Well-posedness of the two-dimensional
  {A}bels-{G}arcke-{G}r\"{u}n model for two-phase flows with unmatched
  densities}, Calc. Var. Partial Differential Equations, 60 (2021), pp.~Paper
  No. 100, 40.

\bibitem{MelchionnaEtAlNonlocalLocalCH}
{\sc S.~Melchionna, H.~Ranetbauer, L.~Scarpa, and L.~Trussardi}, {\em From
  nonlocal to local cahn-hilliard equation}, Adv. Math. Sci. Appl., 28 (2019),
  p.~197–211.

\bibitem{PonceNonlocalPoincare}
{\sc A.~C. Ponce}, {\em An estimate in the spirit of {P}oincar\'{e}'s
  inequality}, J. Eur. Math. Soc. (JEMS), 6 (2004), pp.~1--15.

\bibitem{PonceNonlocalLocalConvergence}
\leavevmode\vrule height 2pt depth -1.6pt width 23pt, {\em A new approach to
  {S}obolev spaces and connections to {$\Gamma$}-convergence}, Calc. Var.
  Partial Differential Equations, 19 (2004), pp.~229--255.

\bibitem{ThesisWeber}
{\sc J.~Weber}, {\em Analysis of diffuse interface models for two-phase flows
  with and without surfactants}, PhD thesis, University Regensburg,
  urn:nbn:de:bvb:355-epub-342471, 2016.

\end{thebibliography}

\def\cprime{$'$} \def\ocirc#1{\ifmmode\setbox0=\hbox{$#1$}\dimen0=\ht0
  \advance\dimen0 by1pt\rlap{\hbox to\wd0{\hss\raise\dimen0
  \hbox{\hskip.2em$\scriptscriptstyle\circ$}\hss}}#1\else {\accent"17 #1}\fi}

\end{document}